\newcommand{\arxiv}[1]{\href{http://arxiv.org/abs/#1}{\texttt{arXiv:#1}}}
\theoremstyle{plain}
\newtheorem{theorem}{Theorem}[section]
\newtheorem{lemma}[theorem]{Lemma}
\newtheorem{corollary}[theorem]{Corollary}
\newtheorem{proposition}[theorem]{Proposition}
\theoremstyle{definition}
\newtheorem{definition}[theorem]{Definition}
\newtheorem{example}[theorem]{Example}
\theoremstyle{remark}
\newbox\mybox
\def\centerfigure#1{%
    \setbox\mybox\hbox{#1}%
    \raisebox{-0.5\dimexpr\ht\mybox+\dp\mybox}{\copy\mybox}%
}
\title{\bf The Redei-Berge Hopf algebra of digraphs}
\author{Vladimir Gruji\'c\\
\small Faculty of Mathematics\\[-0.8ex]
\small University of Belgrade\\[-0.8ex]
\small Serbia\\
\small\tt vladimir.grujic@matf.bg.ac.rs\\
\and
Tanja Stojadinovi\'c\\
\small Faculty of Mathematics\\[-0.8ex]
\small University of Belgrade\\[-0.8ex]
\small Serbia\\
\small\tt tanja.stojadinovic@matf.bg.ac.rs}
\begin{document}

\maketitle

\begin{abstract}
In a series of recent talks Richard Stanley introduced a symmetric function associated to digraphs called the Redei-Berge symmetric function. This symmetric function enumerates descent sets of permutations corresponding to digraphs. We show that such constructed symmetric function arises from a suitable structure of combinatorial Hopf algebra on digraphs. The induced Redei-Berge polynomial satisfies the deletion-contraction property which makes it similar to the chromatic polynomial. The Berge's classical result on the number of Hamiltonian paths in digraphs is a consequence of the reciprocity formula for the Redei-Berge polynomial.

\bigskip\noindent \textbf{Keywords}: digraph, combinatorial Hopf algebra, symmetric function

\small \textbf{MSC2020}: 05C20, 16T30, 05E05
\end{abstract}

\section{Introduction}

A {\it digraph} $X$ is a pair $X=(V,E)$, where $V$ is a finite set and $E$ is a collection
$E\subset\{(u,v)\in V\times V | u\neq v\}$. Elements $u\in V$ are vertices and
elements $(u,v)\in E$ are directed edges of the digraph $X$. A $V$-{\it listing} is a list of all vertices with no repetitions, i.e. a bijective map $\sigma:[n]\rightarrow V$. We write $\Sigma_V$ for the set of all $V$-listings. For a $V$-listing $\sigma=(\sigma_1,\ldots,\sigma_n)\in\Sigma_V$, define the $X$-{\it descent set} as

\[X\mathrm{Des}(\sigma)=\{1\leq i\leq n-1 | (\sigma_i,\sigma_{i+1})\in E\}.\] If $V$ is the set $[n]=\{1,\ldots,n\}$ and $E=\{(i,j) | 1\leq j<i\leq n\}$ then $X$-descent sets are standard descent sets of permutations $\sigma\in\Sigma_{[n]}$.

Grinberg and Stanley associated to a digraph $X$ a generating function for $X$-descent sets and named it the Redei-Berge symmetric function, see \cite{GS}, \cite{RS}

\begin{equation}\label{descents}
U_X=\sum_{\sigma\in\Sigma_V}F_{X\mathrm{Des}(\sigma)}.
\end{equation}
It is expanded in the basis of fundamental quasisymmetric functions
\begin{equation}\label{fundamental}
F_I=\sum_{\substack{1\leq i_1\leq i_2\leq\cdots\leq i_n\\
                  i_j<i_{j+1} \ \mathrm{for \ each} \ j\in I}}x_{i_1}x_{i_2}\cdots x_{i_n}, I\subset[n-1].
\end{equation}

The function $U_X$ first appeared in Chow's paper \cite{C}, and later in Wiseman's \cite{W} in 2007, in connection with the path-cycle symmetric function of a digraph.
The symmetic function $U_X$ is related to two classical results in graph theory. The first of them originates in 1933 paper of the famous Hungarian mathematician Laszlo Redei \cite{LR} and states that every finite tournament contains an odd number of Hamiltonian paths. The other one was found by Claude Berge \cite{CB} and it claims that the numbers of Hamiltonian paths of a digraph and its complement are of the same parity.

We introduce the combinatorial Hopf algebra on digraphs $(\mathcal{D},\zeta)$ and obtain the expansion of the enumerator $U_X$ in the basis of monomial quasisymmetric functions. The character $\zeta$ enumerates Hamiltonian paths in the complementary digraph $\overline {X}$. The principal specialization \[u_X(m)=\mathrm{ps}^1(U_X)(m)\] of the symmetric function $U_X$ is a polynomial in $m$ which enumerates $(f,X)$-friendly $V$-listings for colorings of vertices with at most $m$ colors. We show that this polynomial satisfies the deletion-contraction property
\[u_X(m)=u_{X\setminus e}(m)-u_{X/e}(m).\]
We interpreted the Berge result as the reciprocity formula for the polynomial $u_X(m)$.

\section{Basics from combinatorial Hopf algebras}

The theory of combinatorial Hopf algebras is founded in \cite{ABS}.
We review some basic notions and facts. A {\it partition} $\{V_1,\ldots,V_k\}\vdash V$ of the length $k$
of a finite set $V$ is a set of disjoint nonempty subsets with $V_1\cup\ldots\cup V_k=V$. A {\it composition} $(V_1,\ldots,V_k)\models V$ is an ordered partition. Let $n=|V|$ be the number of elements of $V$. A composition $\alpha\models n$ is a sequence $\alpha=(a_1,\ldots,a_k)$ of
positive integers with $a_1+\cdots+a_k=n$. The type of a composition $(V_1,\ldots,V_k)\models V$ is the composition $\mathrm{type}(V_1,\ldots,V_k)=(|V_1|,\ldots,|V_k|)\models n$. A partition $\lambda\vdash n$ is a
composition $(a_1,\ldots,a_k)\models n$ such that $a_1\geq a_2\geq\cdots\geq
a_k$. There is a bijection between the set of compositions $\mathrm{Comp}(n)$ and the power set $2^{[n-1]}$ given by
$(a_1,\ldots,a_k)\mapsto\{a_1,a_1+a_2,\ldots, a_1+\cdots+a_{k-1}\}$. We denote the inverse of this bijection by $I\mapsto\mathrm{comp}(I)$.

A {\it combinatorial Hopf algebra} $(\mathcal{H},\zeta)$ (CHA
for short) over a field $\mathbf{k}$ is a graded, connected Hopf
algebra $\mathcal{H}=\oplus_{n\geq 0}\mathcal{H}_n$ over $\mathbf{k}$ together with
a multiplicative functional
$\zeta:\mathcal{H}\rightarrow\mathbf{k}$ called the {\it character}.

The terminal object in the category of CHA's is the
CHA of quasisymmetric functions $(QSym,\zeta_Q)$. For basics of
quasisymmetric functions see \cite{EC} and for the Hopf algebra structure on $QSym$ see \cite{GR}. A composition $\alpha=(a_1,\ldots,a_k)\models n$ defines the monomial quasisymmetric function

\[M_\alpha=\sum_{i_1<\cdots<
i_k}x_{i_1}^{a_1}\cdots x_{i_k}^{a_k}.\] Alternatively we write $M_I=M_{\mathrm{comp}(I)},\ I\subset[n-1]$. Another basis consists of fundamental quasisymmetric functions $(\ref{fundamental})$ which are expressed in the monomial basis by
\begin{equation}\label{montofund}
F_I=\sum_{I\subset J}M_J,\ I\subset[n-1].
\end{equation}

The antipode $S$ on $QSym$ is determined by its values on fundamental quasisymmetric functions
\begin{equation}\label{fundantipod}
S(F_I)=(-1)^{n}F_{(I^{\text{op}})^c}, I\subset[n-1],
\end{equation}
where $I^{\text{op}}=\{n-i| i\in I\}$ and $(I^{\text{op}})^c$ is its complement in $[n-1]$.

The principal specialization $\mathrm{ps}^1:QSym\rightarrow\mathbf{k}[m]$ is an algebra homomorphism to the polynomial algebra defined by
\[\mathrm{ps}^1(\Phi)(m)=\Phi(\underbrace{1,\ldots,1}_{\text{$m$ ones}},0,0,\ldots).\] On the monomial basis it takes values
\begin{equation}\label{psmonom}
\mathrm{ps}^1(M_I)(m)={m \choose |I|+1}, I\subset[n-1], n>0
\end{equation} and determines the character $\zeta_Q(M_I)=\mathrm{ps}^1(M_I)(1)=\left\{\begin{array}{cc} 1,& I=\emptyset,\\
0,& \text{otherwise}\end{array}\right.$. The reciprocity formula expresses values of the principal specialization at negative integers
\begin{equation}\label{reciprocity}
\text{ps}^1(\Phi)(-m)=\text{ps}^1(S(\Phi))(m).
\end{equation}

The unique canonical morphism
$\Psi:(\mathcal{H},\zeta)\rightarrow(QSym,\zeta_Q)$ is given on
homogeneous elements with

\begin{equation}\label{canonical}
\Psi(h)=\sum_{\alpha\models n}\zeta_\alpha(h)M_\alpha, \
h\in\mathcal{H}_n,
\end{equation} where $\zeta_\alpha$ is the convolution product
\begin{equation}\label{generalcoeff}
\zeta_\alpha=\zeta_{a_1}\cdots\zeta_{a_k}:\mathcal{H}
\stackrel{\Delta^{(k-1)}}\longrightarrow\mathcal{H}^{\otimes
k}\stackrel{proj}\longrightarrow\mathcal{H}_{a_1}\otimes\cdots\otimes\mathcal{H}_{a_k}
\stackrel{\zeta^{\otimes k}}\longrightarrow\mathbf{k}.
\end{equation}

The algebra of symmetric functions $Sym$ is a subalgebra of
$QSym$ and it is the terminal object in the category of
cocommutative combinatorial Hopf algebras.

\section{Hopf algebra of digraphs}

We say that two digraphs $X=(V,E)$ and $Y=(V',E')$ are
isomorphic if there is a bijection of vertices $f:V\rightarrow V'$
such that $(u,v)\in E$ if and only if $(f(u),f(v))\in E'$.

The restriction of a digraph $X=(V,E)$ on a subposet $S\subset V$ is the
digraph $X|_S=(S,E|_S)$, where $E|_S=\{(u,v)\in E | u,v\in S\}$. For digraphs $X=(V,E)$ and $Y=(V',E')$ we define the product $X\cdot Y$ as the digraph on the disjoint union $V\sqcup V'$ with the set of directed edges \[E\cup E'\cup\{(u,v) | u\in V, v\in V'\}.\] The product of digraphs is obviously an associative, but not a commutative operation.

Let $\mathcal{D}=\oplus_{n\geq0}\mathcal{D}_n$ be the graded vector space over the field of rational numbers $\mathbb{Q}$, which is linearly spanned by the set of all isomorphism classes of digraphs, where the grading is given by the number of vertices. The linear extension of the product on digraphs determines the multiplication $\mu:\mathcal{D}\otimes\mathcal{D}\rightarrow\mathcal{D}$, which turns the space $\mathcal{D}$ into a noncommutative algebra. The restrictions of digraphs may be used to define a comultiplication $\Delta:\mathcal{D}\rightarrow\mathcal{D}\otimes\mathcal{D}$ by

\[\Delta([X])=\sum_{S\subset V}[X|_S]\otimes[X|_{V\setminus S}],\] where $V$ is the vertex set of a digraph $X$. Evidently, this is a coassociative and a cocommutative operation. It is easy to check that $\Delta$ is an algebra morphism, meaning that

\[\Delta([X\cdot Y])=\Delta([X])\cdot\Delta([Y])\] for any isomorphism classes $[X]$ and $[Y]$ of digraphs.

\begin{example}
Some examples of multiplication and comultiplication in $\mathcal{D}$ are the following:
\begin{center}
$[1 \ 2]\cdot[1\leftarrow 2]=$\Big[\centerfigure{\small{\begin{tikzpicture}\small{\node (3) at (0, 0) {3};
\node (1) at (0, 1) {1};
\node (2) at (1, 1) {2};
\node (4) at (1,0) {4};
\draw[->]
  (1) edge (3) (1) edge (4) (2) edge (3) (2) edge (4) (4) edge (3);}
\end{tikzpicture}}}\Big],
\end{center}

\[\Delta\Big(\Big[\centerfigure{\small{\begin{tikzpicture}
\node (1) at (0, 0) {1};
\node (2) at (-0.5, -1) {2};
\node (3) at (0.5,-1) {3};
\draw[->]
  (1) edge (3) (1) edge (2);
\end{tikzpicture}}}\Big]\Big)=\Big[\centerfigure{\small{\begin{tikzpicture}
\node (1) at (0, 0) {1};
\node (2) at (-0.5, -1) {2};
\node (3) at (0.5,-1) {3};
\draw[->]
  (1) edge (3) (1) edge (2);\end{tikzpicture}}}\Big]\otimes[\emptyset]+2[1\rightarrow 2]\otimes[1]+[1 \ 2]\otimes[1]+\]\[+[1]\otimes[1 \ 2]+2[1]\otimes[1\rightarrow 2]+[\emptyset]\otimes\Big[\centerfigure{\small{\begin{tikzpicture}
\node (1) at (0, 0) {1};
\node (2) at (-0.5, -1) {2};
\node (3) at (0.5,-1) {3};
\draw[->]
  (1) edge (3) (1) edge (2);\end{tikzpicture}}}\Big].\]
\end{example}

With these operations the space of digraphs $\mathcal{D}$ is endowed with a structure of a graded, connected, noncommutative and cocommutative Hopf algebra. The unit element $\iota$ is given by the class
$[\emptyset]$ of the digraph on the empty set of vertices and the counit is given by $\epsilon([\emptyset])=1$ and $\epsilon([X])=0$ otherwise.

The antipode of
$\mathcal{D}$ is determined by the Takeuchi formula for the antipode of a graded bialgebra with $S([\emptyset])=[\emptyset]$ and

\[S([X])=\sum_{k\geq 1}(-1)^k\sum_{(V_1,\ldots,V_k)\models V}[X|_{V_1}\cdots X|_{V_k}], X\neq\emptyset,\]
where the inner sum goes over all compositions of the length $k$ of the vertex
set $V$ of a digraph $X$.

We will define some natural operations on digraphs and observe their obvious properties.

\begin{definition}
For a digraph $X=(V,E)$, let $\overline{X}=(V, E^c)$ and $X^\text{op}=(V, E^\text{op})$ be its {\it complementary} and {\it opposite digraph} respectively, whose directed edges are determined by
\[(u,v)\in E^c \ \text{if and only if} \ (u,v)\notin E,\]
\[(u,v)\in E^\text{op} \ \text{if and only if} \ (v,u)\in E.\]
\end{definition}
\begin{lemma}\label{conjopp}
The operations of taking the complementary and the opposite digraphs satisfy
\[\overline{X\cdot Y}=\overline{Y}\cdot\overline{X}, (X\cdot Y)^\text{op}=Y^\text{op}\cdot X^\text{op},\]
\[(X^\text{op})^\text{op}=X, \overline{(\overline{X})}=X,\]
\[\overline{X^\text{op}}=(\overline{X})^\text{op}\] and consequently descend to commuting involutional anti-isomorphisms of the algebra $\mathcal{D}$.
\end{lemma}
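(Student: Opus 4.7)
The plan is to verify the five displayed identities pointwise on representatives $X=(V,E)$, $Y=(V',E')$, observe that complementation and opposition obviously respect digraph isomorphism and preserve the grading, and then conclude formally that they induce anti-involutions of $(\mathcal{D},\mu)$ that commute.

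The two pointwise involutions are immediate: $E\mapsto E^{\text{op}}$ and $E\mapsto E^c$ are each involutions on the set of edge relations on a fixed vertex set. The commutation $\overline{X^{\text{op}}}=(\overline{X})^{\text{op}}$ reduces to the equivalence
\[(u,v)\in\overline{E^{\text{op}}}\iff (v,u)\notin E\iff (v,u)\in E^c\iff (u,v)\in (E^c)^{\text{op}}.\]

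The two anti-morphism identities are the substantive step; both are edge-counting verifications whose only subtlety is bookkeeping for the ``bridge'' edges $\{(u,v)\mid u\in V,\, v\in V'\}$ appended by the product. For $\overline{X\cdot Y}$, a non-loop ordered pair on $V\sqcup V'$ fails to belong to $E\cup E'\cup\{(u,v)\mid u\in V,\, v\in V'\}$ exactly when it is a non-edge of $X$ within $V$, a non-edge of $Y$ within $V'$, or a pair $(v,u)$ with $v\in V'$, $u\in V$; this is precisely the edge set of $\overline{Y}\cdot\overline{X}$. The identity $(X\cdot Y)^{\text{op}}=Y^{\text{op}}\cdot X^{\text{op}}$ then follows by reversing every edge of $X\cdot Y$ and observing that the reverses of the bridge edges $(u,v)$ with $u\in V$, $v\in V'$ are exactly the bridge edges of the opposite product from $V'$ to $V$.

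Finally, since both operations are defined on isomorphism classes individually, they extend linearly to the graded vector space $\mathcal{D}$; the four product-free identities together with the two anti-morphism identities give precisely the statement that $X\mapsto\overline{X}$ and $X\mapsto X^{\text{op}}$ are commuting involutional anti-isomorphisms of the algebra $\mathcal{D}$. I foresee no real obstacle beyond the notational care needed to avoid confusing $\{(u,v)\mid u\in V,\, v\in V'\}$ with its reverse $\{(v,u)\mid u\in V,\, v\in V'\}$ when comparing $X\cdot Y$ with $Y\cdot X$.
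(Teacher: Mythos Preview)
Your proposal is correct and is precisely the direct edge-by-edge verification one would expect. The paper actually states this lemma without proof, treating the five identities as self-evident from the definitions; your write-up simply supplies the routine details the authors chose to omit, so there is no difference in approach to discuss.
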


Define reversion of a $V$-listing $\sigma=(\sigma_1,\ldots,\sigma_n)\in\Sigma_V$ by $\mathrm{rev}\sigma=(\sigma_n,\ldots,\sigma_1)$. Recall that for a subset $I\subset[n-1]$ we denoted by $I^{op}=\{n-i|i\in I\}$ and $I^c=[n-1]\setminus I$ its opposite and complementary subset.

\begin{lemma}\label{descentopcompl}
For a digraph $X$ on the vertex set $V$ and a $V$-listing $\sigma\in\Sigma_V$ it holds
\[X^\text{op}\mathrm{Des}(\mathrm{rev}\sigma)=(X\mathrm{Des}(\sigma))^\text{op},\]
\[\overline{X}\mathrm{Des}(\sigma)=(X\mathrm{Des}(\sigma))^c.\]
\end{lemma}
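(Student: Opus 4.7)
The plan is to prove each identity directly from the definitions of $X\mathrm{Des}$, reversion, opposite, and complement; no deeper machinery is required.

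For the second identity, I would simply observe that since $\sigma_i\neq \sigma_{i+1}$ for every $i$, the pair $(\sigma_i,\sigma_{i+1})$ is a potential edge of both $X$ and $\overline{X}$, and $(\sigma_i,\sigma_{i+1})\in E^c$ iff $(\sigma_i,\sigma_{i+1})\notin E$. Hence $i\in\overline{X}\mathrm{Des}(\sigma)$ iff $i\in[n-1]\setminus X\mathrm{Des}(\sigma)$, which is the definition of $(X\mathrm{Des}(\sigma))^c$.

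For the first identity, set $\tau=\mathrm{rev}\sigma$, so $\tau_j=\sigma_{n+1-j}$ for $j\in[n]$. By the definitions of $X^{\text{op}}$-descent and of the reversed word,
\[
j\in X^{\text{op}}\mathrm{Des}(\tau)\iff (\tau_j,\tau_{j+1})\in E^{\text{op}}\iff (\tau_{j+1},\tau_j)\in E\iff (\sigma_{n-j},\sigma_{n+1-j})\in E.
\]
Substituting $i=n-j$ (a bijection $\{1,\dots,n-1\}\to\{1,\dots,n-1\}$), the last condition becomes $(\sigma_i,\sigma_{i+1})\in E$, i.e.\ $i\in X\mathrm{Des}(\sigma)$. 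Therefore $j\in X^{\text{op}}\mathrm{Des}(\mathrm{rev}\sigma)$ iff $n-j\in X\mathrm{Des}(\sigma)$, which is precisely the definition of $(X\mathrm{Des}(\sigma))^{\text{op}}$.

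There is no genuine obstacle here; both statements are purely formal. The only step that requires any care is keeping the index substitution $i\leftrightarrow n-i$ straight in the first identity, and noting the harmless fact that $\sigma_i\neq \sigma_{i+1}$ so that the edge/non-edge dichotomy for $E$ and $E^c$ really is exhaustive in the second identity.
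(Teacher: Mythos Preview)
Your proof is correct; both identities are indeed immediate from the definitions, and your index substitution $i=n-j$ for the first identity is handled cleanly. The paper states this lemma without proof, so your direct verification is exactly the argument the reader is expected to supply.
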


To obtain a structure of a combinatorial Hopf algebra on digraphs we need a character.
\begin{definition}
A $V$-listing $\sigma=(\sigma_1,\ldots,\sigma_n)\in\Sigma_V$ is a {\it Hamiltonian path} of a digraph $X$ if $X\mathrm{Des}(\sigma)=[n-1]$. This means that $(\sigma_j,\sigma_{j+1})\in E$ for all $j=1,\ldots,n-1$.
\end{definition}
Let $\zeta:\mathcal{D}\rightarrow\mathbb{Q}$ be the linear functional that counts Hamiltonian paths of the complementary digraph (which is an invariant of  isomorphism classes of digraphs)

\begin{equation}\label{character}
\zeta([X])=\#\{\sigma\in\Sigma_V | X\mathrm{Des}(\sigma)=\emptyset\}.
\end{equation}

\begin{proposition}
The linear functional $\zeta$ is multiplicative on the Hopf algebra of digraphs $\mathcal{D}$.
\end{proposition}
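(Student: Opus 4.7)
I want to show $\zeta([X]\cdot[Y])=\zeta([X])\zeta([Y])$ by direct counting: I will set up a bijection between the $V\sqcup V'$-listings counted by $\zeta([X\cdot Y])$ and pairs of listings counted by $\zeta([X])\zeta([Y])$.

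First I unpack the definition $(\ref{character})$: $\zeta([X\cdot Y])$ is the number of $V\sqcup V'$-listings $\sigma$ with no consecutive pair $(\sigma_i,\sigma_{i+1})$ belonging to the edge set of $X\cdot Y$, which by construction equals $E\cup E'\cup\{(u,v)\mid u\in V,\ v\in V'\}$. The key observation is that since \emph{every} ordered pair $(u,v)\in V\times V'$ is an edge of $X\cdot Y$, such a listing $\sigma$ cannot contain any consecutive pair whose first entry lies in $V$ and whose second entry lies in $V'$. Labelling each position of $\sigma$ by its type (membership in $V$ or in $V'$), this forbids the transition $V\to V'$ entirely; hence the type sequence must consist of some block of $V'$'s followed by a block of $V$'s, and since all $m=|V'|$ and $n=|V|$ vertices must appear, the listing has the form $\sigma=(\alpha_1,\ldots,\alpha_m,\beta_1,\ldots,\beta_n)$ with $\alpha\in\Sigma_{V'}$ and $\beta\in\Sigma_V$.

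Next I check the remaining descent conditions. Within the $\alpha$-block, the only possible forbidden consecutive pairs are edges of $Y$, so the condition $(X\cdot Y)\mathrm{Des}(\sigma)=\emptyset$ forces $Y\mathrm{Des}(\alpha)=\emptyset$; similarly it forces $X\mathrm{Des}(\beta)=\emptyset$ on the $\beta$-block. The single middle transition $(\alpha_m,\beta_1)$ goes from a vertex in $V'$ to one in $V$, and no such ordered pair is an edge of $X\cdot Y$ (the added edges go $V\to V'$, not $V'\to V$), so this transition imposes no additional constraint. Therefore $\sigma\mapsto(\alpha,\beta)$ is a bijection between the listings counted by $\zeta([X\cdot Y])$ and pairs in $\{\alpha\in\Sigma_{V'}\mid Y\mathrm{Des}(\alpha)=\emptyset\}\times\{\beta\in\Sigma_V\mid X\mathrm{Des}(\beta)=\emptyset\}$, and the cardinality of the latter is $\zeta([Y])\zeta([X])=\zeta([X])\zeta([Y])$.

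The only step requiring attention is the asymmetry of the product: one must notice that the added edges go \emph{from} $V$ \emph{to} $V'$, which forces the $V'$-block to precede the $V$-block in $\sigma$, and makes the dividing transition automatically non-edge. This is not so much an obstacle as a consistency check — indeed, the product on $\mathcal{D}$ was chosen precisely so that the Hamiltonian-path enumerator on the complementary digraph becomes multiplicative.
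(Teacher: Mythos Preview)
Your proof is correct and is essentially the same argument as the paper's, just unpacked: the paper invokes the identity $\overline{X\cdot Y}=\overline{Y}\cdot\overline{X}$ from Lemma~\ref{conjopp} and then observes that Hamiltonian paths of a product $A\cdot B$ split uniquely as concatenations of Hamiltonian paths of $A$ and $B$, whereas you carry out the equivalent analysis directly on the condition $(X\cdot Y)\mathrm{Des}(\sigma)=\emptyset$. The key observation---that the extra edges $V\to V'$ force the $V'$-block to precede the $V$-block in any descent-free listing, with the dividing step automatically a non-edge---is identical in both.
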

\begin{proof}
Let $X$ and $Y$ be digraphs. Each Hamiltonian path of $\overline{X\cdot Y}=\overline{Y}\cdot\overline{X}$ is uniquely broken into Hamiltonian paths of $\overline{Y}$ and $\overline{X}$. On the other hand any pair of Hamiltonian paths of $\overline{Y}$ and $\overline{X}$ produces by concatenation a Hamiltonian path of
$\overline{X\cdot Y}$.
\end{proof}

We have constructed a combinatorial Hopf algebra of digraphs $(\mathcal{D},\zeta)$. The universal morphism $\Psi:\mathcal{D}\rightarrow QSym$ assigns to each digraph $X$ a quasisymmetric function $\Psi([X])$ determined by $(\ref{canonical})$. If $X$ is a digraph on the vertex set $V$ the coefficients of this quasisymmetric function in the monomial basis are determined by $(\ref{generalcoeff})$ with
\begin{equation}\label{zeta}
\zeta_\alpha([X])=\sum_{\substack{(V_1,\ldots,V_k)\models V\\
                  \mathrm{type}(V_1,\ldots,V_k)=\alpha}}\zeta([X|_{V_1}])\cdots\zeta([X|_{V_k}]).
\end{equation}

Since $\mathcal{D}$ is a cocommutative combinatorial Hopf algebra we actually have that $\Psi([X])$ is a symmetric function, that is, the values of coefficients $\zeta_\alpha$ depend only on partitions of the vertex set $V$.

\section{A generating function for X-descent sets}

From the definition $(\ref{descents})$ of the Redei-Berge symmetric function we have
\begin{equation}\label{fundamentalexp}
U_X=\sum_{\sigma\in\Sigma_V}F_{X\mathrm{Des}(\sigma)}=\sum_{\sigma\in\Sigma_V}\sum_{X\mathrm{Des}(\sigma)=I}F_I=\sum_{I\subset[n-1]}\lambda_I(X)F_I,
\end{equation}
where the coefficients are given by \[\lambda_I(X)=\#\{\sigma\in\Sigma_V | X\mathrm{Des}(\sigma)=I\}.\]
In the basis of monomial quasisymmetric functions we obtain using $(\ref{montofund})$
\begin{equation}\label{monomial}
U_X=\sum_{\sigma\in\Sigma_V}\sum_{X\mathrm{Des}(\sigma)\subset I}M_I=\sum_{I\subset[n-1]}\mu_I(X)M_I,
\end{equation} where
\[\mu_I(X)=\#\{\sigma\in\Sigma_V | X\mathrm{Des}(\sigma)\subset I\}.\]

Grinberg and Stanley in \cite[Theorem 1.31]{GS} expanded the function $U_X$ in the power sum basis of the algebra of symmetric functions. As a part of this expansion they obtained an interpretation of $U_X$ as a certain enumerator of colorings of vertices with positive integers $\mathbb{P}=\{1,2,3,\ldots\}$. We emphasize this interpretation in the following proposition.

\begin{definition}
Let $X$ be a digraph on the vertex set $V$. For a coloring of vertices with positive integers $f:V\rightarrow\mathbb{P}$, a $V$-listing $\sigma=(\sigma_1,\ldots,\sigma_n)\in\Sigma_V$ is called $(f,X)$-{\it friendly} if \[f(\sigma_1)\leq f(\sigma_2)\leq\cdots\leq f(\sigma_n) \ \text{and}\]  \[f(\sigma_j)<f(\sigma_{j+1}) \ \mathrm{for \ each} \ j\in[n-1] \ \mathrm{satisfying} \ (\sigma_j,\sigma_{j+1})\in X.\]
\end{definition}

Denote by $\Sigma_V(f,X)$ the set of all $(f,X)$-friendly $V$-listings and by $\delta_f:\Sigma_V\rightarrow\{0,1\}$ its indicator function. For a coloring $f:V\rightarrow\mathbb{P}$ we write $\mathbf{x}_f=\prod_{v\in V}x_{f(v)}$.

\begin{proposition}[{\cite[section 2.11]{GS}}]\label{nature}
The Redei-Berge symmetric function $U_X$ of a digraph $X$ on the vertex set $V$ is given by
\[U_X=\sum_{f:V\rightarrow\mathbb{P}}\sum_{\sigma\in\Sigma_V}\delta_f(\sigma)\mathbf{x}_f.\]
\end{proposition}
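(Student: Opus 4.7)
The plan is to unfold both sides to the level of monomials and verify a one-to-one correspondence. Starting from the fundamental expansion $(\ref{fundamentalexp})$ and substituting the defining formula $(\ref{fundamental})$ for each $F_{X\mathrm{Des}(\sigma)}$, one obtains
\[
U_X=\sum_{\sigma\in\Sigma_V}\;\sum_{\substack{1\leq i_1\leq i_2\leq\cdots\leq i_n\\ i_j<i_{j+1}\ \mathrm{for\ each}\ j\in X\mathrm{Des}(\sigma)}}x_{i_1}x_{i_2}\cdots x_{i_n}.
\]
The task then reduces to rewriting the inner sum in terms of colorings $f:V\to\mathbb{P}$.

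For each fixed listing $\sigma=(\sigma_1,\ldots,\sigma_n)$, I would use the bijectivity of $\sigma:[n]\to V$ to reindex weakly increasing tuples $(i_1,\ldots,i_n)\in\mathbb{P}^n$ as colorings $f:V\to\mathbb{P}$ via $f(\sigma_j)=i_j$. Under this identification, $x_{i_1}\cdots x_{i_n}$ becomes $\prod_{v\in V}x_{f(v)}=\mathbf{x}_f$; the weak monotonicity $i_1\leq\cdots\leq i_n$ becomes the chain $f(\sigma_1)\leq\cdots\leq f(\sigma_n)$; and the strictness $i_j<i_{j+1}$ at each $j\in X\mathrm{Des}(\sigma)$ translates to $f(\sigma_j)<f(\sigma_{j+1})$ at each $j$ with $(\sigma_j,\sigma_{j+1})\in E$. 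Together these conditions are exactly the definition of $\sigma$ being $(f,X)$-friendly, so the inner sum equals $\sum_{f:V\to\mathbb{P}}\delta_f(\sigma)\mathbf{x}_f$. Swapping the outer and inner summations then yields the claimed identity.

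I do not expect a real obstacle: the statement is essentially a restatement of the definition of $F_I$, with the role of the indexing set $I$ played by $X\mathrm{Des}(\sigma)$ and the role of the underlying positions $1,\ldots,n$ played by the vertices $\sigma_1,\ldots,\sigma_n$. The only point worth a brief remark is the legitimacy of interchanging the two infinite sums, which is immediate since for any fixed monomial $\mathbf{x}_f$ only finitely many pairs $(f,\sigma)$ contribute, so the whole expression is well-defined as an element of the ring of formal power series.
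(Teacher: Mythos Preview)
Your proof is correct and follows essentially the same approach as the paper: identify $F_{X\mathrm{Des}(\sigma)}=\sum_{f:V\to\mathbb{P}}\delta_f(\sigma)\,\mathbf{x}_f$ via the reindexing $f(\sigma_j)=i_j$, then sum over $\sigma$ and swap the two summations. You are simply more explicit about the bijection and the legitimacy of the sum interchange than the paper, which asserts the key identity without further comment.
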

\begin{proof}
The fundamental quasisymmetric function $F_{X\mathrm{Des}(\sigma)}$ defined by $(\ref{fundamental})$ can be described as
\[F_{X\mathrm{Des}(\sigma)}=\sum_{f:V\rightarrow\mathbb{P}}\delta_f(\sigma)\mathbf{x}_f.\] Summing over all $\sigma\in\Sigma_V$ gives
\[U_X=\sum_{\sigma\in\Sigma_V}\sum_{f:V\rightarrow\mathbb{P}}\delta_f(\sigma)\mathbf{x}_f=
\sum_{f:V\rightarrow\mathbb{P}}\sum_{\sigma\in\Sigma_V}\delta_f(\sigma)\mathbf{x}_f.\]
\end{proof}

By Proposition \ref{nature} the symmetric function $U_X$ is described as an integer points enumerator. This was immanent in the theory of generalized permutohedra, see \cite{GPS} for details. Let $\mathcal{F}=(V_1,\ldots,V_k)\models V$ be a composition of the set of vertices. It is of the length $|\mathcal{F}|=k$ and of the type $\mathrm{type}(\mathcal{F})=((|V_1|,\ldots,|V_k|)$. Denote by $\sigma_\mathcal{F}^\circ$ the set of all colorings $f:V\rightarrow\mathbb{P}$ that satisfy
\begin{itemize}
\item $f(v_p)=f(v_q)$ \ \text{whenever} \ $v_p$ \ \text{and} \ $v_q$ \ \text{lies in the same block of} \ $\mathcal{F}$;
\item $f(v_p)<f(v_q)$ \ \text{whenever} \ $v_p$ \ \text{lies in an earlier block of} \ $\mathcal{F}$ \ \text{than} \ $v_q$.
\end{itemize}
We say that a composition $\mathcal{F}\models V$ is the compositional type of a coloring $f\in\mathbb{P}^V$, that is, $f$ is an $\mathcal{F}$-coloring, if and only if $f\in\sigma_{\mathcal{F}}^\circ$. Each coloring $f$ has a unique compositional type. Let $M_\mathcal{F}$ be the enumerator of $\mathcal{F}$-colorings
\[M_\mathcal{F}=\sum_{f\in c_\mathcal{F}^\circ}\mathbf{x}_f.\] It depends only on the type of a composition, i.e. $M_\mathcal{F}=M_{\mathrm{type}(\mathcal{F})}$.
The following lemma claims that the indicator $\delta_f$ depends only on the compositional type of a coloring $f$.

\begin{lemma}
If $f,g\in\mathbb{P}^V$ are colorings of vertices of a digraph $X$ with the same compositional type then $\Sigma_V(f,X)=\Sigma_V(g,X)$.
\end{lemma}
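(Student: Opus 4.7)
The plan is to observe that whether a $V$-listing $\sigma$ is $(f,X)$-friendly depends on $f$ only through the ternary comparisons $\{<,=,>\}$ between consecutive colors along $\sigma$, and that these ternary comparisons are completely determined by the compositional type of $f$.

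First, I would rewrite the defining conditions of $(f,X)$-friendliness in a pointwise form. Fix $\sigma=(\sigma_1,\ldots,\sigma_n)\in\Sigma_V$. For each $j\in[n-1]$, the two clauses of the definition amount to: if $(\sigma_j,\sigma_{j+1})\notin X$, then $f(\sigma_j)\leq f(\sigma_{j+1})$, and if $(\sigma_j,\sigma_{j+1})\in X$, then $f(\sigma_j)<f(\sigma_{j+1})$. In either case the clause is decided by knowing which of the three relations $f(\sigma_j)<f(\sigma_{j+1})$, $f(\sigma_j)=f(\sigma_{j+1})$, $f(\sigma_j)>f(\sigma_{j+1})$ holds, together with the fixed edge set of $X$.

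Next, I would invoke the definition of $\sigma_{\mathcal{F}}^{\circ}$ to show that the compositional type $\mathcal{F}=(V_1,\ldots,V_k)$ of a coloring determines the above ternary comparison for every ordered pair $(u,v)\in V\times V$: namely $f(u)=f(v)$ iff $u$ and $v$ lie in the same block of $\mathcal{F}$, and $f(u)<f(v)$ iff the block of $u$ precedes the block of $v$ in $\mathcal{F}$. If $g$ has the same compositional type, then exactly the same equivalences describe $g$, so $f$ and $g$ agree on every such comparison.

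Combining these two observations, for every $\sigma\in\Sigma_V$ each clause of the $(f,X)$-friendliness condition holds iff the corresponding clause of the $(g,X)$-friendliness condition holds, whence $\Sigma_V(f,X)=\Sigma_V(g,X)$. I do not anticipate any genuine obstacle: the statement is essentially a formal consequence of the definition of the compositional type, and the whole proof should fit in a few lines once the dependence on pairwise comparisons is isolated.
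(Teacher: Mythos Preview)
Your proposal is correct and follows essentially the same approach as the paper: both argue that $(f,X)$-friendliness depends only on the order comparisons $f(u)<f(v)$, $f(u)=f(v)$, $f(u)>f(v)$ between pairs of vertices, and that these comparisons are completely determined by the common compositional type. The paper compresses this into a single sentence, while you spell out the pointwise reformulation of the friendliness conditions, but the content is the same.
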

\begin{proof} It follows by observing that for any two vertices $u,v\in V$, the inequality $f(u)<f(u)$ is equivalent to $g(u)<g(v)$ (and likewise for equalities and weak inequalities).
\end{proof}
According to the previous lemma the following definition is correct.

\begin{definition}
For a digraph $X$ a $V$-listing $\sigma\in\Sigma_V$ is called $(\mathcal{F},X)$-{\it friendly} if it is $(f,X)$-friendly for some coloring $f\in c_\mathcal{F}^\circ$. We write $\Sigma_V(\mathcal{F},X)$ for the set of all $(\mathcal{F},X)$-friendly $V$-listings of $X$.
\end{definition}

We give another characterization for $(\mathcal{F},X)$-friendly $V$-listings, which leads to their total number.
\begin{lemma}\label{friendly}
Let $\mathcal{F}=(V_1,\ldots,V_k)\models V$ be a composition and $X_i=X|_{V_i}, i=1,\ldots,k$ be the restrictions of $X$. A $V$-listing $\sigma\in\Sigma_V$ is $(\mathcal{F},X)$-friendly if and only if it can be written as the concatenation $\sigma=\sigma^1\cdots\sigma^k$ of Hamiltonian paths $\sigma^i\in\Sigma_{V_i}$ of complementary digraphs $\overline{X}_i$. The total number of $(\mathcal{F},X)$-friendly $V$-listings is given by
\[|\Sigma_V(\mathcal{F},X)|=\zeta([X_1])\cdots\zeta([X_k]).\]
\end{lemma}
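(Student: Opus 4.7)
The plan is to unpack the definition of $(\mathcal{F}, X)$-friendliness and observe that the compositional-type constraint on admissible colorings forces the listing to respect the block structure of $\mathcal{F}$, after which the within-block condition becomes exactly the Hamiltonian-path condition defining $\zeta$.

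For the forward direction, suppose $\sigma \in \Sigma_V(\mathcal{F}, X)$ with witnessing coloring $f \in c_\mathcal{F}^\circ$. By construction $f$ is constant on each block $V_i$ and strictly increases from block to block. The friendliness condition requires in particular that $f(\sigma_1) \le \cdots \le f(\sigma_n)$, so the listing must exhaust $V_1$ first, then $V_2$, and so on; hence $\sigma$ admits a unique decomposition $\sigma = \sigma^1 \cdots \sigma^k$ with $\sigma^i \in \Sigma_{V_i}$. Inside each block, $f$ is constant, so the second clause in the definition of $(f,X)$-friendliness ($f(\sigma_j) < f(\sigma_{j+1})$ whenever $(\sigma_j,\sigma_{j+1}) \in E$) rules out every $X$-edge between consecutive entries of $\sigma^i$. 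Thus $X_i\mathrm{Des}(\sigma^i) = \emptyset$, which by $(\ref{character})$ says exactly that $\sigma^i$ is a Hamiltonian path of $\overline{X}_i$. Note that at the boundaries between blocks no restriction arises, since $f$ strictly increases there and the conditional in the friendliness definition is automatically satisfied regardless of whether an $X$-edge is present.

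For the converse, given $\sigma^i \in \Sigma_{V_i}$ with $X_i\mathrm{Des}(\sigma^i) = \emptyset$ for each $i$, define the coloring $f : V \to \mathbb{P}$ by $f(v) = i$ for $v \in V_i$; then $f \in c_\mathcal{F}^\circ$ and the concatenation $\sigma = \sigma^1 \cdots \sigma^k$ is $(f,X)$-friendly, because inside each block the colors coincide while no consecutive pair is an $X$-edge, and across the boundaries the colors strictly increase so the conditional holds vacuously. Hence $\sigma \in \Sigma_V(\mathcal{F}, X)$.

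The two directions establish a bijection between $\Sigma_V(\mathcal{F},X)$ and the Cartesian product of the sets of Hamiltonian paths of $\overline{X}_1, \ldots, \overline{X}_k$, so
\[
|\Sigma_V(\mathcal{F},X)| = \prod_{i=1}^k \#\{\sigma^i \in \Sigma_{V_i} \mid X_i\mathrm{Des}(\sigma^i) = \emptyset\} = \zeta([X_1]) \cdots \zeta([X_k]),
\]
by the definition of $\zeta$ in $(\ref{character})$. The whole argument is a matter of bookkeeping rather than a genuine obstacle; the one point worth spelling out carefully is the observation that the boundary pairs $(\sigma^i_{|V_i|}, \sigma^{i+1}_1)$ impose no constraint, so the decomposition really does factor across the blocks with no coupling between consecutive $\sigma^i$'s.
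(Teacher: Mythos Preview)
Your proof is correct. The paper itself states this lemma without proof, evidently regarding it as a direct unwinding of the definitions; your argument supplies exactly that unwinding, and the one nontrivial observation you flag---that the block boundaries impose no constraint because $f$ strictly increases there---is precisely the point that makes the factorization work.
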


\begin{proposition}\label{cones}
The symmetric function $U_X$ of a digraph $X$ on the vertex set $V$ has the following expansion
\[U_X=\sum_{\mathcal{F}}|\Sigma_V(\mathcal{F},X)|M_\mathcal{F}.\]
\end{proposition}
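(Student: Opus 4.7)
The plan is to start from the integer-point interpretation of $U_X$ given in Proposition~\ref{nature} and regroup the outer sum over colorings $f\colon V\to\mathbb{P}$ by their compositional type. Since every coloring $f\in\mathbb{P}^V$ belongs to exactly one cone $\sigma_\mathcal{F}^\circ$, the family $\{\sigma_\mathcal{F}^\circ\}_{\mathcal{F}\models V}$ partitions $\mathbb{P}^V$, and therefore
\[U_X=\sum_{\mathcal{F}\models V}\sum_{f\in\sigma_\mathcal{F}^\circ}\sum_{\sigma\in\Sigma_V}\delta_f(\sigma)\mathbf{x}_f.\]

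Next I would invoke the lemma immediately preceding the proposition, which states that $\Sigma_V(f,X)$ — and hence the indicator $\delta_f$ — depends only on the compositional type of the coloring $f$. Consequently, for every $f\in\sigma_\mathcal{F}^\circ$ one has $\sum_{\sigma\in\Sigma_V}\delta_f(\sigma)=|\Sigma_V(\mathcal{F},X)|$, a number independent of the particular $f$ drawn from $\sigma_\mathcal{F}^\circ$. This constant can therefore be factored out of the innermost sum.

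Applying the definition $M_\mathcal{F}=\sum_{f\in\sigma_\mathcal{F}^\circ}\mathbf{x}_f$ then collapses the remaining expression into
\[U_X=\sum_{\mathcal{F}\models V}|\Sigma_V(\mathcal{F},X)|\sum_{f\in\sigma_\mathcal{F}^\circ}\mathbf{x}_f=\sum_{\mathcal{F}\models V}|\Sigma_V(\mathcal{F},X)|M_\mathcal{F},\]
which is the claimed identity. (If desired one may further rewrite $|\Sigma_V(\mathcal{F},X)|=\zeta([X_1])\cdots\zeta([X_k])$ using Lemma~\ref{friendly}, recovering the monomial-basis coefficients already obtained abstractly from \eqref{zeta}; but for this proposition the bare counting statement is enough.)

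There is no genuine obstacle: the argument is essentially an exchange of summation combined with grouping colorings by compositional type, and the two key inputs (the integer-point formula and the type-invariance of $\delta_f$) are already established. The only piece of bookkeeping worth emphasizing is that the sum is indexed by compositions of the \emph{set} $V$, even though each quasisymmetric function $M_\mathcal{F}=M_{\mathrm{type}(\mathcal{F})}$ depends only on the integer composition $\mathrm{type}(\mathcal{F})$; several set compositions $\mathcal{F}$ of the same integer type therefore contribute collectively to the coefficient of $M_{\mathrm{type}(\mathcal{F})}$.
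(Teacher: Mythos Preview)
Your proof is correct and follows exactly the same route as the paper: start from Proposition~\ref{nature}, partition the colorings $f\colon V\to\mathbb{P}$ by compositional type, use the lemma that $\delta_f$ depends only on this type to pull out $|\Sigma_V(\mathcal{F},X)|$, and recognize the remaining inner sum as $M_\mathcal{F}$. The paper's proof is in fact a one-line version of precisely this argument.
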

\begin{proof}
The identity follows from Proposition \ref{nature}
\[U_X=\sum_{f:V\rightarrow\mathbb{P}}\sum_{\sigma\in\Sigma_V}\delta_f(\sigma)\mathbf{x}_f=\sum_{\mathcal{F}}\sum_{f\in c_\mathcal{F}^\circ}\big(\sum_{\sigma\in\Sigma_V}\delta_f(\sigma)\big)\mathbf{x}_f=\sum_{\mathcal{F}}|\Sigma_V(\mathcal{F},X)|\sum_{f\in c_\mathcal{F}^\circ}\mathbf{x}_f.\]
\end{proof}

\begin{theorem}
Let $\Psi:\mathcal{D}\rightarrow QSym$ be the universal morphism from the combinatorial Hopf algebra of digraphs to quasisymmetric functions. Then for a digraph $X$
\[\Psi([X])=U_X.\]
\end{theorem}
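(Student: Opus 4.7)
The plan is to show that the universal morphism formula unpacks, term by term, into the expansion of $U_X$ given in Proposition \ref{cones}. Since everything needed is already in place, the proof will essentially be a chain of identifications rather than a new combinatorial argument.

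First I would apply the definition (\ref{canonical}) of the canonical morphism to the specific Hopf algebra $(\mathcal{D}, \zeta)$, writing
\[\Psi([X]) = \sum_{\alpha \models n} \zeta_\alpha([X])\, M_\alpha.\]
Then I would substitute the explicit formula (\ref{zeta}) for $\zeta_\alpha([X])$, which expresses it as a sum over set compositions $\mathcal{F} = (V_1,\ldots,V_k) \models V$ of type $\alpha$, with summand $\zeta([X|_{V_1}])\cdots\zeta([X|_{V_k}])$. Combining the two sums collapses the indexing to a single sum over all set compositions $\mathcal{F} \models V$, and using the fact that $M_\mathcal{F} = M_{\mathrm{type}(\mathcal{F})}$ gives
\[\Psi([X]) = \sum_{\mathcal{F} \models V} \zeta([X|_{V_1}])\cdots\zeta([X|_{V_k}])\, M_\mathcal{F}.\]

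Next I would invoke Lemma \ref{friendly}, which identifies the product $\zeta([X|_{V_1}])\cdots\zeta([X|_{V_k}])$ with $|\Sigma_V(\mathcal{F}, X)|$, the number of $(\mathcal{F}, X)$-friendly $V$-listings. This yields
\[\Psi([X]) = \sum_{\mathcal{F} \models V} |\Sigma_V(\mathcal{F}, X)|\, M_\mathcal{F},\]
which is exactly the expansion given by Proposition \ref{cones}. Hence $\Psi([X]) = U_X$.

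There is no real obstacle, since all the work has been front-loaded: the character $\zeta$ was chosen so as to count Hamiltonian paths in the complement, and Lemma \ref{friendly} was set up precisely to match this count with the concatenation structure coming from the iterated coproduct via restrictions $X|_{V_i}$. The only thing to be mildly careful about is ensuring that summing over set compositions $\mathcal{F}$ of a fixed type $\alpha$ and then over $\alpha$ agrees with summing directly over $\mathcal{F} \models V$, and that $M_\mathcal{F}$ really depends only on $\mathrm{type}(\mathcal{F})$ — both of which are standard and already noted in the text.
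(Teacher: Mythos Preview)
Your proof is correct and follows essentially the same approach as the paper: both combine Proposition~\ref{cones}, Lemma~\ref{friendly}, and formula~(\ref{zeta}) to identify the monomial-basis coefficients of $U_X$ with the convolution coefficients $\zeta_\alpha([X])$. The only cosmetic difference is direction---the paper starts from the expansion of $U_X$ in Proposition~\ref{cones} and works toward $\zeta_{\mathrm{comp}(I)}([X])$, whereas you start from $\Psi([X])$ and unfold to the same expansion.
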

\begin{proof}
According to the expansion of Proposition \ref{cones} we have
\[U_X=\sum_{\alpha\models|V|}\Big(\sum_{\text{type}(\mathcal{F})=\alpha}|\Sigma_V(\mathcal{F},X)|\Big)M_\alpha,\] which shows that
\[\mu_I(X)=\sum_{\text{type}(\mathcal{F})=\text{comp}(I)}|\Sigma_V(\mathcal{F},X)|.\] By the characterization \ref{friendly} we have further
\[\mu_I(X)=\sum_{\text{type}(V_1,\ldots,V_k)=\text{comp}(I)}\zeta([X|_{V_1}])\cdots\zeta([X|_{V_k}]).\] This is exactly the coefficient $(\ref{zeta})$
\[\mu_I(X)=\zeta_{\text{comp}(I)}([X]).\]

\end{proof}

We are interested in relations between symmetric functions $U_X, U_{X^\text{op}}$ and $U_{\overline{X}}$.

\begin{proposition}\label{opposite}
The Redei-Berge symmetric functions of a digraph $X$ and its opposite digraph $X^\text{op}$ are equal:
\[U_{X^\text{op}}=U_X.\]
\end{proposition}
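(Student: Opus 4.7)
The plan is to compare the expansions of $U_X$ and $U_{X^{\text{op}}}$ in the monomial basis supplied by Proposition \ref{cones}. The expansion in the fundamental basis is tempting via the reversal bijection $\sigma\mapsto\mathrm{rev}\,\sigma$ and Lemma \ref{descentopcompl}, but it leaves us with $\sum_\sigma F_{(X\mathrm{Des}(\sigma))^{\text{op}}}$, and since $F_I\neq F_{I^{\text{op}}}$ in general one cannot conclude directly. Working with $M$-coefficients avoids this issue because they only see the type of the underlying set composition, and the key invariance step becomes a simple bijection.

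First I would invoke Proposition \ref{cones} for both digraphs to write
\[
U_X=\sum_{\mathcal{F}\models V}|\Sigma_V(\mathcal{F},X)|\,M_\mathcal{F},\qquad U_{X^{\text{op}}}=\sum_{\mathcal{F}\models V}|\Sigma_V(\mathcal{F},X^{\text{op}})|\,M_\mathcal{F},
\]
and then reduce the claim to showing $|\Sigma_V(\mathcal{F},X)|=|\Sigma_V(\mathcal{F},X^{\text{op}})|$ for every set composition $\mathcal{F}=(V_1,\ldots,V_k)\models V$. By Lemma \ref{friendly} this amounts to the multiplicative identity
\[
\prod_{i=1}^{k}\zeta([X|_{V_i}])=\prod_{i=1}^{k}\zeta([X^{\text{op}}|_{V_i}]),
\]
and since $X^{\text{op}}|_{V_i}=(X|_{V_i})^{\text{op}}$ directly from the definitions, everything reduces to the single pointwise claim $\zeta([Y])=\zeta([Y^{\text{op}}])$ for every digraph $Y$.

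The pointwise claim is the heart of the proof, and it is not really an obstacle: by the definition \eqref{character}, $\zeta([Y])$ counts $V$-listings $\sigma$ with $Y\mathrm{Des}(\sigma)=\emptyset$. The first identity of Lemma \ref{descentopcompl} gives $Y^{\text{op}}\mathrm{Des}(\mathrm{rev}\,\sigma)=(Y\mathrm{Des}(\sigma))^{\text{op}}$, and since $\emptyset^{\text{op}}=\emptyset$, reversal is a bijection between $\{\sigma:Y\mathrm{Des}(\sigma)=\emptyset\}$ and $\{\tau:Y^{\text{op}}\mathrm{Des}(\tau)=\emptyset\}$, proving $\zeta([Y])=\zeta([Y^{\text{op}}])$. (Equivalently, one can argue via Hamiltonian paths in $\overline{Y}$ using $\overline{Y^{\text{op}}}=(\overline{Y})^{\text{op}}$ from Lemma \ref{conjopp}.)

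Combining the three steps identifies the $M$-coefficients of $U_X$ and $U_{X^{\text{op}}}$ composition by composition, yielding $U_X=U_{X^{\text{op}}}$. The only potential subtlety is verifying $X^{\text{op}}|_{V_i}=(X|_{V_i})^{\text{op}}$, which is immediate from the definitions of restriction and opposite.
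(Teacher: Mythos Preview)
Your proof is correct, but it follows a different route from the paper's. Ironically, the paper does essentially the thing you dismissed in your opening paragraph: it uses the reversal bijection together with Lemma~\ref{descentopcompl} to obtain $\mu_I(X)=\mu_{I^{\text{op}}}(X^{\text{op}})$ for every $I\subset[n-1]$, hence $U_X=\mathrm{rev}\,U_{X^{\text{op}}}$ where $\mathrm{rev}\,M_I:=M_{I^{\text{op}}}$. The extra step you were missing is that $U_{X^{\text{op}}}$ is already known to be a \emph{symmetric} function (by cocommutativity of $\mathcal{D}$), and reversal of compositions acts as the identity on $Sym$; this is how the paper closes the gap you flagged.

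Your approach instead matches the $M_{\mathcal{F}}$-coefficients set-composition by set-composition, reducing via Lemma~\ref{friendly} to the pointwise invariance $\zeta([Y])=\zeta([Y^{\text{op}}])$, which you prove cleanly by reversal. This has the small advantage of not invoking the symmetry of $U_X$ as a separate ingredient, at the cost of passing through Proposition~\ref{cones} and Lemma~\ref{friendly}. Both arguments ultimately hinge on the same reversal bijection; the paper packages it at the level of integer compositions and then quotients by symmetry, while you package it inside the character $\zeta$ on each block.
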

\begin{proof}
For a $V$-listing $\sigma\in\Sigma_V$ of vertices $V$ it follows from \ref{descentopcompl} that $X\mathrm{Des}(\sigma)\subset I$ if and only if $X^\text{op}\mathrm{Des}(\mathrm{rev}\sigma)\subset I^\text{op}$ for any subset $I\subset[n-1]$. Therefore $\mu_I(X)=\mu_{I^\text{op}}(X^\text{op}), I\subset[n-1]$.
The operation of reversion can be defined on the monomial basis by $\mathrm{rev}M_I=M_{I^\text{op}}$ and linearly extended to all quasisymmetric functions.
Using expansion $(\ref{monomial})$ we obtain
\[U_X=\mathrm{rev}U_{X^\text{op}}.\] The statement follows since the reversion on symmetric functions is an identity.
\end{proof}

The operation of taking the complementary digraph leads to the Chow antipode formula \cite[Theorem 8.1]{GS}, which has been proved in \cite{C}. We reprove this formula avoiding the power sum expansion.

\begin{theorem}\label{antipode}
The antipode $S$ acts on the Redei-Berge symmetric function $U_X$ of a digraph $X$ on the vertex set $V$ by
\[S(U_X)=(-1)^{|V|}U_{\overline{X}}.\]
\end{theorem}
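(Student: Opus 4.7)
The plan is to expand $U_X$ in the fundamental basis, apply the known antipode formula $(\ref{fundantipod})$ termwise, and then push the combinatorial operations $I \mapsto I^{\text{op}}$ and $I \mapsto I^c$ back onto the digraph using Lemma \ref{descentopcompl}. The final step will repackage the resulting sum as $U_{\overline{X}}$ by invoking Proposition \ref{opposite}.

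More concretely, I would start from $(\ref{fundamentalexp})$, which gives $U_X = \sum_{\sigma \in \Sigma_V} F_{X\mathrm{Des}(\sigma)}$. Since $S$ is linear and the antipode of $QSym$ on a fundamental indexed by a subset of $[n-1]$ carries a sign $(-1)^n$ where $n=|V|$, I obtain
\[S(U_X) \;=\; (-1)^{|V|}\sum_{\sigma\in\Sigma_V} F_{((X\mathrm{Des}(\sigma))^{\text{op}})^c}.\]
Next I apply the two identities of Lemma \ref{descentopcompl} in succession: first $(X\mathrm{Des}(\sigma))^{\text{op}} = X^{\text{op}}\mathrm{Des}(\mathrm{rev}\sigma)$, and then, using that complementing the descent set corresponds to taking the complementary digraph, $(X^{\text{op}}\mathrm{Des}(\mathrm{rev}\sigma))^c = \overline{X^{\text{op}}}\mathrm{Des}(\mathrm{rev}\sigma)$. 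By Lemma \ref{conjopp} the operations of complementation and reversal commute, so $\overline{X^{\text{op}}} = (\overline{X})^{\text{op}}$.

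Substituting this in, and using that reversion $\sigma \mapsto \mathrm{rev}\sigma$ is a bijection on $\Sigma_V$, the sum becomes
\[S(U_X) \;=\; (-1)^{|V|}\sum_{\tau\in\Sigma_V} F_{(\overline{X})^{\text{op}}\mathrm{Des}(\tau)} \;=\; (-1)^{|V|}\, U_{(\overline{X})^{\text{op}}}.\]
Finally, Proposition \ref{opposite} identifies $U_{(\overline{X})^{\text{op}}}$ with $U_{\overline{X}}$, yielding the claim.

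I do not anticipate a real obstacle: the argument is essentially a three-line manipulation assembled from ingredients already established in the paper (the explicit antipode on $F_I$, the two equalities of Lemma \ref{descentopcompl}, the involution commutativity of Lemma \ref{conjopp}, and Proposition \ref{opposite}). The only point requiring care is bookkeeping the composite operation $((-)^{\text{op}})^c$ on descent sets and checking that it matches precisely the operations applied to the digraph ($X \leadsto \overline{X^{\text{op}}}$) and to the listing ($\sigma \leadsto \mathrm{rev}\sigma$). This is where a sign or an order-of-operations slip could creep in, so I would verify once on a small example (say a single directed edge on two vertices) that the two sides of $S(U_X) = (-1)^{|V|} U_{\overline{X}}$ indeed agree.
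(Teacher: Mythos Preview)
Your proposal is correct and follows essentially the same route as the paper's proof: expand in the fundamental basis, apply the antipode formula $(\ref{fundantipod})$, and then use Lemmas \ref{descentopcompl} and \ref{conjopp} together with Proposition \ref{opposite} to rewrite the result as $(-1)^{|V|}U_{\overline{X}}$. The only cosmetic difference is that the paper collects the sum by the coefficients $\lambda_I(X)$ and performs a change of summation variable $J=(I^{\text{op}})^c$, whereas you keep the sum indexed by listings $\sigma$ and reindex via $\tau=\mathrm{rev}\,\sigma$; these are the same argument in slightly different bookkeeping.
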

\begin{proof}
By the expansion $(\ref{fundamentalexp})$ of $U_X$ and the action of $S$ on fundamental quasisymmetric functions $(\ref{fundantipod})$ we have
\[S(U_X)=\sum_{I}\lambda_I(X)S(F_I)=(-1)^{|V|}\sum_I\lambda_I(X)F_{(I^\text{op})^c}.\] Changing the summing variable $J=(I^\text{op})^c$ gives \[S(U_X)=(-1)^{|V|}\sum_J\lambda_{(J^c)^\text{op}}(X)F_J.\] It follows from Lemma \ref{descentopcompl} that
\[\lambda_{(J^c)^\text{op}}(X)=\lambda_{J^c}(X^\text{op})=\lambda_J(\overline{X^\text{op}})=\lambda_J((\overline{X})^\text{op}).\] The last identity is a consequence of Lemma \ref{conjopp}. Finally, Proposition \ref{opposite} implies $\lambda_J((\overline{X})^\text{op})=\lambda_J(\overline{X})$ which finishes the proof.
\end{proof}

\section{The Redei-Berge polynomial}

We call the principal specialization $u_X(m)=\mathrm{ps}^1(U_X)(m)$ the {\it Redei-Berge polynomial}. It is an immediate consequence of Proposition \ref{nature} that
\[u_X(m)=\sum_{f:V\rightarrow[m]}|\Sigma_V(f,X)|,\] which reveals the combinatorial nature of this polynomial.

\begin{proposition}
The Redei-Berge polynomial $u_X(m)$ counts $(f,X)$-friendly $V$-listings for colorings $f:V\rightarrow[m]$ with at most $m$ colors.
\end{proposition}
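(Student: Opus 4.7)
The plan is to start directly from the integer-point formula for $U_X$ furnished by Proposition \ref{nature}, and then apply the principal specialization $\mathrm{ps}^1(\cdot)(m)$ term by term. There is essentially one computation to carry out, so the proof amounts to tracking what each factor in the summation becomes under the substitution $x_1=\cdots=x_m=1$, $x_{m+1}=x_{m+2}=\cdots=0$.

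More concretely, I would first recall the formula
\[U_X=\sum_{f:V\rightarrow\mathbb{P}}\sum_{\sigma\in\Sigma_V}\delta_f(\sigma)\mathbf{x}_f\]
from Proposition \ref{nature}, where $\mathbf{x}_f=\prod_{v\in V}x_{f(v)}$ and $\delta_f(\sigma)$ is the indicator of $(f,X)$-friendliness. Under $\mathrm{ps}^1(\cdot)(m)$, the monomial $\mathbf{x}_f$ evaluates to $1$ exactly when every color $f(v)$ lies in $\{1,\ldots,m\}$, and to $0$ otherwise. Thus applying $\mathrm{ps}^1$ to both sides restricts the outer sum from $f:V\to\mathbb{P}$ to $f:V\to[m]$, while each surviving term contributes $\delta_f(\sigma)\in\{0,1\}$.

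Summing the indicator $\delta_f(\sigma)$ over $\sigma\in\Sigma_V$ gives precisely $|\Sigma_V(f,X)|$, by the very definition of $\Sigma_V(f,X)$. Interchanging and collecting yields
\[u_X(m)=\mathrm{ps}^1(U_X)(m)=\sum_{f:V\rightarrow[m]}\sum_{\sigma\in\Sigma_V}\delta_f(\sigma)=\sum_{f:V\rightarrow[m]}|\Sigma_V(f,X)|,\]
which is exactly the combinatorial interpretation claimed. The only subtle point worth verifying is that the principal specialization may be applied termwise to the (infinite) expansion of Proposition \ref{nature}; this is standard because the family $\{\mathbf{x}_f\}$ is locally finite once $x_{m+1},x_{m+2},\ldots$ are set to zero (only the finitely many $f:V\to[m]$ contribute). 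No obstacle beyond this bookkeeping arises, since the combinatorial content was already packaged into Proposition \ref{nature}.
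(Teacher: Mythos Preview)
Your proposal is correct and follows exactly the approach the paper takes: the paper simply remarks that the identity $u_X(m)=\sum_{f:V\rightarrow[m]}|\Sigma_V(f,X)|$ is an immediate consequence of Proposition~\ref{nature}, which is precisely what you carry out in detail by specializing $x_1=\cdots=x_m=1$ and $x_i=0$ for $i>m$. Your write-up is more explicit than the paper's one-line justification, but the underlying argument is identical.
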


According to the expansion $(\ref{monomial})$ and the principal specialization of monomial quasisymmetric functions $(\ref{psmonom})$ we obtain for $X\neq\emptyset$
\[u_X(m)=\sum_{I\subset[n-1]}\mu_I(X){m \choose |I|+1}=\sum_{k=1}^n\Big(\sum_{|I|=k-1}\mu_I(X)\Big){m \choose k}.\] If $\mu_k=\sum_{|I|=k-1}\mu_I(X), k=1,\ldots,n$ we have
\begin{equation}\label{RBpol}
u_X(m)=\mu_1{m \choose 1}+\mu_2{m \choose 2}+\cdots+\mu_n{m \choose n}.
\end{equation} The polynomial $u_X(m)$ is integer-valued by definition. It turns out that it satisfies the {\it deletion-contraction property}, similarly to the chromatic polynomial of the underlying graph.
\begin{definition}
Let $X=(V,E)$ be a digraph. The {\it underlying graph} $\widetilde{X}=(V,\widetilde{E})$ of the digraph $X$ is the graph on $V$ with the set of edges $\widetilde{E}=\{\{u,v\}|(u,v)\in E\}$.
\end{definition}
\begin{definition}
The {\it deletion} of an edge $e\in E$ from a digraph $X=(V,E)$ is the digraph $X\setminus e=(V,E\setminus\{e\})$. The {\it contraction} of $X$ by $e=(u,v)\in E$ is the digraph $X/e=(V', E')$, where $V'=V\setminus\{u,v\}\cup\{e\}$ and $E'$ contains all edges in $E$ with vertices different from $u,v\in V$ and additionally for $w\neq u,v$ we have
\begin{itemize}
\item $(w,e)\in E'$ if and only if $(w,u)\in E$ and
\item $(e,w)\in E'$ if and only if $(v,w)\in E$.
\end{itemize}
\end{definition}
\begin{example}
For the digraph $X$ on the set of vertices $V=\{1,2,3\}$ and the set of edges $E=\{(1,2),(2,1),(1,3),(3,2)\}$ the underlying graph is the complete graph on three vertices $\widetilde{X}=K_3$. The contractions $X/(1,2)$ and $X/(2,1)$ are the empty and complete digraph on two-vertex set, respectively.
\end{example}

\begin{theorem}
The Redei-Berge polynomial of a digraph $X=(V,E)$ satisfies the deletion-contraction property
\[u_X(m)=u_{X\setminus e}(m)-u_{X/e}(m), e\in E.\]
\end{theorem}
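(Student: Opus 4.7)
The plan is to work directly with the combinatorial description $u_X(m) = \sum_{f : V \to [m]} |\Sigma_V(f,X)|$ from the previous proposition. I would fix the edge $e = (u,v) \in E$ and, for each coloring $f$, compare the sets $\Sigma_V(f,X)$ and $\Sigma_V(f, X \setminus e)$.

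Deleting $e$ only relaxes constraints, so every $(f,X)$-friendly listing is automatically $(f, X \setminus e)$-friendly, giving $\Sigma_V(f,X) \subseteq \Sigma_V(f, X \setminus e)$. A listing $\sigma$ lies in the difference $\Sigma_V(f, X \setminus e) \setminus \Sigma_V(f,X)$ precisely when $\sigma$ is $(f, X \setminus e)$-friendly but has some index $j$ with $(\sigma_j, \sigma_{j+1}) \in E$ yet $f(\sigma_j) \not< f(\sigma_{j+1})$. Since every remaining edge of $X \setminus e$ already enforces strict inequality, such a $j$ must satisfy $(\sigma_j, \sigma_{j+1}) = e$, and weak monotonicity then forces $f(u) = f(v)$.

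The main step is to exhibit a bijection between these excess pairs $(\sigma, f)$ and the pairs $(\bar\sigma, \bar f)$ enumerated by $u_{X/e}(m)$. Given $(\sigma, f)$ in the excess set, I would collapse the consecutive block $u, v$ in $\sigma$ into the single merged vertex $e \in V'$ to obtain $\bar\sigma \in \Sigma_{V'}$, and define $\bar f$ by $\bar f(w) = f(w)$ for $w \neq u, v$ and $\bar f(e) = f(u) = f(v)$. The inverse splits $e$ back into the adjacent pair $u, v$ and duplicates the color. One must verify that $\sigma$ is $(f, X \setminus e)$-friendly if and only if $\bar\sigma$ is $(\bar f, X/e)$-friendly: monotonicity is preserved because $f(u) = f(v) = \bar f(e)$, and for the strict inequalities a short case analysis on the neighbors of the collapsed block suffices. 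The key observations are $(w,u) \in E \iff (w,e) \in E'$ for a vertex $w$ appearing just before $u$ in $\sigma$, and $(v,w) \in E \iff (e,w) \in E'$ for a vertex $w$ appearing just after $v$; these are exactly the defining conditions of $X/e$, and all remaining adjacencies of $\sigma$ survive untouched in $\bar\sigma$.

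Summing this bijection over $f$ yields $u_{X \setminus e}(m) - u_X(m) = u_{X/e}(m)$, which rearranges to the claim. The only delicate point is the friendliness equivalence under contraction, but since it is essentially forced by the definition of $X/e$, I expect no real obstacle.
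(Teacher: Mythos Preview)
Your proposal is correct and follows essentially the same route as the paper's proof: both start from the formula $u_X(m)=\sum_{f:V\to[m]}|\Sigma_V(f,X)|$, observe the inclusion $\Sigma_V(f,X)\subseteq\Sigma_V(f,X\setminus e)$, identify the difference as those listings with $u,v$ consecutive and $f(u)=f(v)$, and set up the collapse/expand bijection with pairs counted by $u_{X/e}(m)$. Your write-up is in fact a bit more explicit about why the friendliness conditions match under contraction, but the argument is the same.
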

\begin{proof}
We observe that $\Sigma_V(f,X)\subset\Sigma_V(f,X\setminus e)$ for a coloring $f:V\rightarrow[m]$. For $e=(u,v)\in E$, the difference $\Sigma_V(f,X\setminus e)\setminus\Sigma_V(f,X)$ contains all $V$-lists $\sigma=(\sigma_1,\ldots,\sigma_n)$ with properties
\[f(\sigma_1)\leq\cdots\leq f(\sigma_n),\]
\[f(\sigma_i)<f(\sigma_{i+1}), (\sigma_i,\sigma_{i+1})\in E\setminus e \ \text{and}\]
\[(u,v)=(\sigma_j,\sigma_{j+1}) \ \text{for \ some} \ j=1,\ldots,n-1 \ \text{and} \ f(u)=f(v).\] Such a list determines the $V'$-list \[\widehat{\sigma}=\left\{\begin{array}{ccc} ((u,v),\sigma_3,\ldots,\sigma_n)& j=1,\\
(\sigma_1,\ldots,\sigma_{j-1},(u,v),\sigma_{j+2},\ldots,\sigma_n)& 1<j<n-1,\\ (\sigma_1,\ldots,\sigma_{n-2},(u,v))& j=n-1\end{array}\right.\] on the set of vertices $V'$ of the digraph $X/e$. Let $\widetilde{f}:V'\rightarrow[m]$ be the coloring induced by $f$ with $\widetilde{f}(w)=\left\{\begin{array}{cc} f(w),& w\neq e,\\
f(u)=f(v),& w=e\end{array}\right.$. Then $\widehat{\sigma}$ is a $(\widetilde{f}, X/e)$-friendly $V'$-list. On the other hand, it is easy to see that any $(\widetilde{f},X/e)$-friendly $V'$-list for some coloring $\widetilde{f}:V'\rightarrow[m]$ is obtained uniquely in this way.
\end{proof}
The deletion-contraction property completely determines the Redei-Berge polynomial $u_X(m)$ by recursion. The difference from the chromatic polynomial of the underlying graph $\chi_{\widetilde{X}}(m)$ occurs as a consequence of different initial values.
\begin{example}
\[u_{[1\rightarrow 2]}(m)=m^2, \chi_{[1 \ \noindent\rule{0,3cm}{0.4pt} \ 2]}(m)=m^2-m.\]
\end{example}
An easy inductive argument shows the following nontrivial fact.
\begin{corollary}
The polynomial $u_X(m)$ is with integer coefficients.
\end{corollary}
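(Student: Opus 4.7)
The plan is to induct on the pair (number of vertices $n$, number of edges $|E|$), ordered lexicographically, using the deletion-contraction identity of the preceding theorem as the recursive step. The outer induction on $n$ will handle the contraction case (which drops the vertex count), and the inner induction on $|E|$ will handle the deletion case.

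For the base, I need to handle edgeless digraphs $X=(V,\emptyset)$. Here $X\mathrm{Des}(\sigma) = \emptyset$ for every listing $\sigma$, so by Proposition \ref{nature} a $V$-listing $\sigma$ is $(f,X)$-friendly precisely when $f(\sigma_1)\leq\cdots\leq f(\sigma_n)$. Counting pairs $(\sigma,f)\in\Sigma_V\times[m]^V$ satisfying this weak monotonicity---$n!$ choices for $\sigma$, and $\binom{m+n-1}{n}$ weakly increasing colorings per $\sigma$---gives
\[u_X(m)=n!\binom{m+n-1}{n}=m(m+1)(m+2)\cdots(m+n-1),\]
which is visibly a polynomial with integer coefficients.

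For the inductive step, if $X$ has at least one edge $e$, the deletion-contraction identity $u_X(m)=u_{X\setminus e}(m)-u_{X/e}(m)$ expresses $u_X$ as the difference of two Redei-Berge polynomials associated to strictly smaller digraphs: $X\setminus e$ has the same $n$ vertices but one fewer edge, and $X/e$ has $n-1$ vertices. By the inductive hypothesis both have integer coefficients, hence so does $u_X(m)$.

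The only subtle point, and the reason the corollary is described as ``nontrivial'', is that formula (\ref{RBpol}) expresses $u_X(m)$ as an integer combination of binomials $\binom{m}{k}$, which immediately makes $u_X(m)$ only integer-\emph{valued} rather than integer-coefficient in the monomial basis. That the monomial-basis coefficients actually lie in $\mathbb{Z}$ emerges only by combining the single explicit edgeless computation with the deletion-contraction recursion; this is why the base case must be made explicit rather than left implicit, and it is the one place the argument uses combinatorial content beyond bookkeeping.
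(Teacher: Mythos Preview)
Your proof is correct and matches the paper's intended approach: the paper itself gives no explicit proof, only the remark that ``an easy inductive argument shows'' the corollary, and you have fleshed out exactly that induction via deletion-contraction with the edgeless base case (which the paper later records as the example $u_{D_n}(m)=m(m+1)\cdots(m+n-1)$). Your added commentary on why the result is nontrivial despite formula~(\ref{RBpol}) is accurate and helpful.
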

The reciprocity formula $(\ref{reciprocity})$ and the antipode formula of Theorem $\ref{antipode}$ give the following identity.

\begin{theorem}\label{polynomial}
The Redei-Berge polynomial satisfies \[u_X(-m)=(-1)^{|V|}u_{\overline{X}}(m).\]
\end{theorem}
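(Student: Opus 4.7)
The plan is to chain together two results already established in the paper: the general reciprocity formula $(\ref{reciprocity})$ for the principal specialization on $QSym$, and the antipode formula of Theorem \ref{antipode}. Starting from the definition $u_X(m)=\mathrm{ps}^1(U_X)(m)$, I substitute $-m$ and apply $(\ref{reciprocity})$ to rewrite
\[u_X(-m)=\mathrm{ps}^1(U_X)(-m)=\mathrm{ps}^1(S(U_X))(m).\]
This step is valid because $U_X$ lies in $Sym\subset QSym$ (as noted just after $(\ref{zeta})$), so the reciprocity identity applies directly to it.

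Next, Theorem \ref{antipode} tells us that $S(U_X)=(-1)^{|V|}U_{\overline{X}}$. Since $\mathrm{ps}^1$ is a linear (in fact an algebra) homomorphism, the scalar $(-1)^{|V|}$ pulls out, yielding
\[u_X(-m)=(-1)^{|V|}\mathrm{ps}^1(U_{\overline{X}})(m)=(-1)^{|V|}u_{\overline{X}}(m).\]

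There is essentially no obstacle: all of the real work has already been carried out, in proving Theorem \ref{antipode} (via Lemmas \ref{conjopp}, \ref{descentopcompl} and Proposition \ref{opposite}) and in developing the CHA reciprocity. The only consistency check worth noting is that the sign $(-1)^n$ appearing in the antipode formula $(\ref{fundantipod})$ for fundamental quasisymmetric functions matches the $(-1)^{|V|}$ in the conclusion, which is automatic since the grading on $\mathcal{D}$ is by $n=|V|$.
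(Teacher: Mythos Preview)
Your proof is correct and follows exactly the approach indicated in the paper, which simply states that the identity is a consequence of the reciprocity formula $(\ref{reciprocity})$ and the antipode formula of Theorem \ref{antipode}. You have merely made the two-step substitution explicit.
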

\begin{example}
Let $D_n$ be the empty digraph on $n$ vertices. The complementary digraph $\overline{D}_n$ is complete and $u_{\overline{D}_n}(m)=m(m-1)\cdots(m-n+1)$. The reciprocity formula gives $u_{D_n}(m)=m(m+1)\cdots(m+n-1)$.
\end{example}


The Berge theorem about Hamiltonian paths of a digraph is a consequence of the reciprocity theorem for the Redei-Berge polynomial $u_X(m)$.

\begin{theorem}
The numbers of Hamiltonian paths of a digraph $X$ and its complementary digraph $\overline{X}$ are of the same parity.
\end{theorem}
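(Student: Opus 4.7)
The plan is to evaluate the Redei-Berge polynomial at $m=1$ and $m=-1$, and then combine the reciprocity formula of Theorem \ref{polynomial} with the integrality of the coefficients of $u_X(m)$.

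First I would compute $u_X(1)$. Specializing to a single variable means assigning all vertices the unique color $1$, so a $V$-listing $\sigma$ is $(f,X)$-friendly precisely when no pair $(\sigma_i,\sigma_{i+1})$ lies in $E$, that is, when $X\mathrm{Des}(\sigma)=\emptyset$. By Lemma \ref{descentopcompl} this is equivalent to $\overline{X}\mathrm{Des}(\sigma)=[n-1]$, i.e. $\sigma$ being a Hamiltonian path of $\overline{X}$. Hence
\[u_X(1)=\zeta([X])=h(\overline{X}),\]
where $h(Y)$ denotes the number of Hamiltonian paths of a digraph $Y$. Applied to $\overline{X}$ in place of $X$, the same identity yields $u_{\overline{X}}(1)=h(X)$.

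Next I would apply Theorem \ref{polynomial} at $m=1$, giving
\[u_X(-1)=(-1)^{|V|}u_{\overline{X}}(1)=(-1)^{|V|}h(X).\]
By the preceding corollary, $u_X(m)\in\mathbb{Z}[m]$, so $u_X(1)-u_X(-1)$ is twice the sum of the odd-degree coefficients and is therefore even. Thus
\[h(\overline{X})=u_X(1)\equiv u_X(-1)=(-1)^{|V|}h(X)\equiv h(X)\pmod 2,\]
which is the desired conclusion.

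No essential obstacle is expected: every ingredient is already in place, and the only point requiring a moment of care is the combinatorial identification $u_X(1)=h(\overline{X})$, which follows directly from the definition of $(f,X)$-friendliness in the one-color case.
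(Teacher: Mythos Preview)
Your proof is correct and follows essentially the same route as the paper: both identify $u_X(1)$ with the number of Hamiltonian paths of $\overline{X}$ (you via the one-color friendly-listing interpretation, the paper via $u_X(1)=\mu_\emptyset(X)=\zeta([X])$), then apply the reciprocity of Theorem~\ref{polynomial} and use that $u_X(1)\equiv u_X(-1)\pmod 2$. Your argument is in fact slightly more explicit than the paper's, since you justify this last parity step by invoking the integer-coefficients corollary, which the paper leaves implicit.
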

\begin{proof}
It follows from the definition $(\ref{character})$ of the character $\zeta$ that \[\mu_\emptyset(\overline{X})=\zeta([{\overline{X}}])\] is the number of Hamiltonian paths of $X$. We have \[u_X(1)=\mathrm{ps}^1(U_X)(1)=\mu_\emptyset(X).\] On the other hand, from Theorem \ref{polynomial} we obtain \[u_X(-1)=(-1)^{|V|}u_{\overline{X}}(1)=(-1)^{|V|}\mu_{\emptyset}(\overline{X}).\] Since $u_X(1)$ and $u_X(-1)$ are of the same parity the statement follows.
\end{proof}

\section{Tournaments}

A {\it tournament} is a digraph $X=(V,E)$ such that whenever $u$ and $v$ are two distinct vertices in $V$, exactly one of $(u,v)$ and $(v,u)$ is an edge of $X$. Tournaments are obviously closed under operations of taking restrictions and products, so their isomorphism classes generate a Hopf subalgebra $\mathcal{T}\subset\mathcal{D}$ of the Hopf algebra of digraphs. We call $\mathcal{T}$ the Hopf algebra of tournaments. The complementary digraph $\overline{X}$ of a tournament $X$ is a tournament itself and is equal to its opposite digraph $\overline{X}=X^\text{op}$. The Proposition \ref{opposite} and the Theorem \ref{antipode} imply

\begin{corollary}
For each tournament we have $U_X=U_{\overline{X}}$. The antipode acts on $U_X$ by $S(U_X)=(-1)^{|V|}U_X$.
\end{corollary}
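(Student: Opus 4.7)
The plan is to exploit the very special structural property of tournaments noted just above the statement: for a tournament $X$ the complementary digraph coincides with the opposite digraph, i.e. $\overline{X}=X^{\text{op}}$. This is an immediate consequence of the definition of a tournament, since between any two distinct vertices $u,v$ exactly one of $(u,v),(v,u)$ is an edge, so the non-edges of $X$ are precisely the reversals of its edges.

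Once this identification is in hand, the first claim $U_X=U_{\overline{X}}$ reduces to applying Proposition \ref{opposite}, which gives $U_{X^{\text{op}}}=U_X$. Substituting $\overline{X}=X^{\text{op}}$ finishes that part.

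For the second claim, I would simply feed the first claim into the antipode formula of Theorem \ref{antipode}. That theorem asserts $S(U_X)=(-1)^{|V|}U_{\overline{X}}$ for an arbitrary digraph, and since we have just shown $U_{\overline{X}}=U_X$ in the tournament case, the right-hand side collapses to $(-1)^{|V|}U_X$.

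There is essentially no obstacle: the corollary is a two-line deduction whose whole content is the observation that $\overline{X}=X^{\text{op}}$ on tournaments together with the two previously established theorems. The only thing worth emphasising in the write-up is why $\overline{X}$ is again a tournament (so that the identifications above make sense) and why the $\overline{(\cdot)}=(\cdot)^{\text{op}}$ identity holds; both are immediate from the definitions.
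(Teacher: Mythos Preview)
Your proposal is correct and matches the paper's approach exactly: the paper states the corollary as an immediate consequence of the identity $\overline{X}=X^{\text{op}}$ for tournaments together with Proposition~\ref{opposite} and Theorem~\ref{antipode}, precisely as you outline.
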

\begin{corollary}
The Redei-Berge polynomial of a tournament $X$ on $n$ vertices satisfies $u_X(-m)=(-1)^{n}u_X(m)$.
\end{corollary}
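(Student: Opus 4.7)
The plan is to combine the reciprocity theorem for $u_X$ (Theorem \ref{polynomial}) with the observation just established for tournaments that $U_X = U_{\overline{X}}$. Neither step requires new combinatorial work; the statement is essentially a specialization of what has already been proved.

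First I would recall that, for an arbitrary digraph $X$ on the vertex set $V$, Theorem \ref{polynomial} gives
\[u_X(-m) = (-1)^{|V|} u_{\overline{X}}(m).\]
Next, specialize to the case where $X$ is a tournament. By the preceding corollary, $U_X = U_{\overline{X}}$ as symmetric functions. Applying the principal specialization $\mathrm{ps}^1(\cdot)(m)$ to both sides, which is an algebra homomorphism to $\mathbb{Q}[m]$, yields the polynomial identity
\[u_X(m) = u_{\overline{X}}(m).\]
Substituting this into the reciprocity formula above gives
\[u_X(-m) = (-1)^{n} u_X(m),\]
where $n = |V|$, which is exactly the claim.

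The only real content is to note that the tournament hypothesis is used exactly in passing from $U_X = U_{\overline{X}}$ (a symmetric-function identity coming from $\overline{X} = X^{\text{op}}$ together with Proposition \ref{opposite}) to the equality $u_X(m) = u_{\overline{X}}(m)$ of their principal specializations; the self-duality under complementation is what breaks the asymmetry present in the general reciprocity formula. There is no essential obstacle: the proof is a two-line chain of substitutions.
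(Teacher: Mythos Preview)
Your proof is correct and matches the paper's intended argument: the corollary is stated without proof there, but is presented as an immediate consequence of Theorem~\ref{polynomial} together with the preceding corollary $U_X=U_{\overline{X}}$, exactly as you argue. There is nothing to add.
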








\bibliographystyle{model1a-num-names}
\bibliography{<your-bib-database>}




\end{document}